\newcommand{\TITLE}{Non-improvability of sharp endpoint estimates}
\let\save@mathaccent\mathaccent
\newcommand*\if@single[3]{%
  \setbox0\hbox{${\mathaccent"0362{#1}}^H$}%
  \setbox2\hbox{${\mathaccent"0362{\kern0pt#1}}^H$}%
  \ifdim\ht0=\ht2 #3\else #2\fi
  }
\newcommand*\rel@kern[1]{\kern#1\dimexpr\macc@kerna}
\newcommand*\widebar[1]{\@ifnextchar^{{\wide@bar{#1}{0}}}{\wide@bar{#1}{1}}}
\newcommand*\wide@bar[2]{\if@single{#1}{\wide@bar@{#1}{#2}{1}}{\wide@bar@{#1}{#2}{2}}}
\newcommand*\wide@bar@[3]{%
  \begingroup
  \def\mathaccent##1##2{%
    \let\mathaccent\save@mathaccent
    \if#32 \let\macc@nucleus\first@char \fi
    \setbox\z@\hbox{$\macc@style{\macc@nucleus}_{}$}%
    \setbox\tw@\hbox{$\macc@style{\macc@nucleus}{}_{}$}%
    \dimen@\wd\tw@
    \advance\dimen@-\wd\z@
    \divide\dimen@ 3
    \@tempdima\wd\tw@
    \advance\@tempdima-\scriptspace
    \divide\@tempdima 10
    \advance\dimen@-\@tempdima
    \ifdim\dimen@>\z@ \dimen@0pt\fi
    \rel@kern{0.6}\kern-\dimen@
    \if#31
      \overline{\rel@kern{-0.6}\kern\dimen@\macc@nucleus\rel@kern{0.4}\kern\dimen@}%
      \advance\dimen@0.4\dimexpr\macc@kerna
      \let\final@kern#2%
      \ifdim\dimen@<\z@ \let\final@kern1\fi
      \if\final@kern1 \kern-\dimen@\fi
    \else
      \overline{\rel@kern{-0.6}\kern\dimen@#1}%
    \fi
  }%
  \macc@depth\@ne
  \let\math@bgroup\@empty \let\math@egroup\macc@set@skewchar
  \mathsurround\z@ \frozen@everymath{\mathgroup\macc@group\relax}%
  \macc@set@skewchar\relax
  \let\mathaccentV\macc@nested@a
  \if#31
    \macc@nested@a\relax111{#1}%
  \else
    \def\gobble@till@marker##1\endmarker{}%
    \futurelet\first@char\gobble@till@marker#1\endmarker
    \ifcat\noexpand\first@char A\else
      \def\first@char{}%
    \fi
    \macc@nested@a\relax111{\first@char}%
  \fi
  \endgroup
}
\numberwithin{equation}{section}
\theoremstyle{plain}
\newtheorem{theorem}{Theorem}[section]
\newtheorem{corollary}[theorem]{Corollary}
\newtheorem{example}[theorem]{Example}
\theoremstyle{definition}
\newtheorem{definition}[theorem]{Definition}
\newtheorem{remark}[theorem]{Remark}
\newtheorem{convention}[theorem]{Convention}
\newcommand{\R}{\mathbb{R}}
\newcommand{\rn}{\R^n}
\newcommand{\HH}{\mathcal{H}}
\newcommand{\N}{\mathbb{N}}
\newcommand{\M}{\mathscr{M}}
\newcommand{\RR}{\mathcal{R}}
\newcommand{\SSS}{\mathcal{S}}
\newcommand{\RM}{(\RR,\mu)}
\newcommand{\SN}{(\SSS,\nu)}
\newcommand{\MOR}{\M_0\RM}
\newcommand{\MOS}{\M_0\SN}
\newcommand{\C}{\mathscr{C}}
\newcommand*\dd{\mathop{}\!\mathrm{d}}
\title{\TITLE}
\author{Zden\v ek Mihula}
\address{Zden\v ek Mihula, Czech Technical University in Prague, Faculty of Electrical Engineering, Department of Mathematics, Technick\'a~2, 166~27 Praha~6, Czech Republic}
\email{mihulzde@fel.cvut.cz}
\urladdr{\href{https://orcid.org/0000-0001-6962-7635}{0000-0001-6962-7635}}
\author{Lubo\v{s} Pick}
\address{Lubo\v{s} Pick, Department of Mathematical Analysis,
Faculty of Mathematics and Physics,
Charles University,
So\-ko\-lo\-vsk\'a~83,
186~75 Praha~8,
Czech Republic}
\email{pick@karlin.mff.cuni.cz}
\urladdr{\href{https://orcid.org/0000-0002-3584-1454}{0000-0002-3584-1454}}
\author{Armin Schikorra}
\address{Armin Schikorra, Department of Mathematics,
University of Pittsburgh,
301 Thackeray Ave,
Pittsburgh, PA 15260, United States of America}
\email{armin@pitt.edu}
\urladdr{\href{https://orcid.org/0000-0001-9242-1782}{0000-0001-9242-1782}}
\begin{document}
\setcitestyle{numbers}
\bibliographystyle{plainnat}

\subjclass[2020]{47B38, 42B35, 44A35, 46E30, 26A33}
\keywords{nonimprovable function spaces, Riesz potential, quasi-Banach lattices, Lorentz spaces, compatible triples}
\thanks{This research was partly supported by grant no.~23-04720S of the Czech Science Foundation and grant no.~DMS-2044898 (CAREER) of the US National Science Foundation. A.S. is a Humboldt Fellow.}

\begin{abstract}
For an integer $n$ and the parameter $\gamma\in(0,n)$, the Riesz potential $I_\gamma$ is known to take boundedly $L^1(\rn)$ into $L^{\frac{n}{n-\gamma},\infty}(\rn)$, and also that the target space is the smallest possible among all rearrangement-invariant Banach function spaces. We study the natural question whether the target space can be improved when the domain space is replaced with a (smaller) Lorentz space $L^{1,q}(\rn)$ with $q\in(0,1)$. The classical methods cannot be used because the spaces $L^{1,q}(\rn)$ are not equivalently normable. We develop two new abstract methods, establishing rather general results, a particular consequence of each (albeit achieved through completely different means) being the negative answer to this question. The methods are based on special functional properties of endpoint spaces. The results can be applied to a wide field of operators satisfying certain minimal requirements.
\end{abstract}

\maketitle


\section{Introduction and general overview}
The \emph{Riesz potential} is defined, for $n\in\N$ and $\gamma\in(0,n)$, by the formula
\begin{equation*}
    I_{\gamma}f(x) =
    c_{\gamma}
    \int_{\rn}\frac{f(y)}{|x-y|^{n-\gamma}}\,dy
    \quad\text{for $x\in\rn$,}
\end{equation*}
where $c_{\gamma}$ is the normalization constant given by
\begin{equation*}
    c_{\gamma} =
    \frac{\pi^{\frac{n}{2}}2^{\gamma}\Gamma(\frac{\gamma}{2})}{\Gamma(\frac{n-\gamma}{2})}.
\end{equation*}
The Riesz potentials constitute a scale of linear operators and are known to be of great importance in various subdisciplines of analysis and its applications. One of the most important questions concerning these operators is how they act on function spaces.

It is known that, for $p\in(1,\frac{n}{\gamma})$, one has
\begin{equation}
\label{E:riesz-on-lebesgue}
    I_{\gamma}\colon L^{p}(\rn)\to L^{\frac{np}{n-\gamma p}}(\rn),
\end{equation}
meaning that the operator $I_{\gamma}$ takes boundedly the Lebesgue space $L^{p}(\rn)$ into another Lebesgue space, $L^{\frac{np}{n-\gamma p}}(\rn)$. This result is classical and sometimes is referred to as the \emph{Hardy--Littlewood--Sobolev theorem on Riesz potentials}, see e.g.~\cite[Theorem~5.5]{Sad:79}. It is also known, and can easily be verified using radially decreasing symmetric functions, that~\eqref{E:riesz-on-lebesgue} is no longer true when $p=1$. A similar discrepancy is met at the opposite endpoint, namely when $p=\frac{n}{\gamma}$. Then the expression $\frac{np}{n-\gamma p}$ can be considered infinity, but the resulting analogue of~\eqref{E:riesz-on-lebesgue}, that is,
\begin{equation*}
    I_{\gamma}\colon L^{\frac{n}{\gamma}}(\rn)\to L^{\infty}(\rn),
\end{equation*}
is not true. In order to get suitable replacements in either of these borderline cases, one has to resort to more general spaces than the Lebesgue ones. A particularly suitable scale of function spaces for this purpose is that of \emph{two-parameter Lorentz spaces}. 

Given a $\sigma$-finite nonatomic measure space $(\RR,\mu)$ and $p,q\in(0,\infty]$, the \emph{Lorentz functional} $\|\cdot\|_{p,q}$ is defined, for every $\mu$-measurable function $f\colon \RR\to\R$ by
\begin{equation*}
    \|f\|_{p,q}
    =
    \begin{cases}\displaystyle
				\left( p \int_{0}^{\infty}
					\bigl[s\, \mu(\{x\in\RR:|f(x)|>s\})^\frac{1}{p}\bigr]^q \,\frac{\dd s}{s}
				\right)^\frac{1}{q}
					& \text{if $q\in(0,\infty)$},
					\\[\bigskipamount]
				\sup\limits_{s\in(0,\infty)} s\, \mu(\{x\in\RR:|f(x)|>s\})^
                \frac{1}{p}
					& \text{if $q=\infty$.}
			\end{cases}
\end{equation*}
The value $\|f\|_{p,q}$ might be finite or infinite, and the corresponding \emph{Lorentz space} $L^{p,q}(\RR,\mu)$ is then defined as the collection of all those $f$ for which one has $\|f\|_{p,q}<\infty$. When $p=q$, the Lorentz space $L^{p,q}\RM$ coincides with the Lebesgue space $L^p\RM$, with equal (quasi-)norms (e.g., \cite[Theorem~1.13]{LL:01}). Lorentz and Lebesgue spaces are typical examples of (quasi-)normed rearrangement-invariant structures (function spaces), in which equimeasurable functions (i.e., having the same measure of upper level sets) have the same (quasi-)norm.

Equipped with the definition of Lorentz spaces, we can now enhance~\eqref{E:riesz-on-lebesgue} twofold. First, for $p\in(1,\frac{n}{\gamma})$, we get a sharper estimate 
\begin{equation}
\label{E:riesz-on-lorentz}
    I_{\gamma}\colon L^{p}(\rn)\to L^{\frac{np}{n-\gamma p},p}(\rn),
\end{equation}
and, second, we can formulate sensible replacements for both borderline estimates, namely 
\begin{equation}
\label{E:riesz-on-l-1}
    I_{\gamma}\colon L^{1}(\rn)\to L^{\frac{n}{n-\gamma},\infty}(\rn)
\end{equation}
and
\begin{equation}
\label{E:riesz-on-l-infty}
    I_{\gamma}\colon L^{\frac{n}{\gamma},1}(\rn)\to L^{\infty}(\rn).
\end{equation}
Two-parameter Lorentz spaces enjoy the following important scaling property: one has
\begin{equation}
\label{E:scaling-lorentz}
    L^{p,q}\hookrightarrow L^{p,r}
    \quad\text{whenever $p\in(0,\infty]$ and $0<q\le r\le\infty$,}
\end{equation}
where $\hookrightarrow$ stands for a continuous embedding. Remarkably,~\eqref{E:scaling-lorentz} holds regardless of the underlying measure space (in particular, it does not matter whether $\mu(\RR)$ is finite or not). Owing to the relation~\eqref{E:scaling-lorentz}, applied with the triple $(p,q,r)$ replaced with $(\frac{np}{n-\gamma p}, p,\frac{np}{n-\gamma p})$, and observing the trivial inequality $p<\frac{np}{n-\gamma p}$, we see that~\eqref{E:riesz-on-lorentz} is indeed an essential improvement of~\eqref{E:riesz-on-lebesgue}.  The relations~\eqref{E:riesz-on-lorentz}--\eqref{E:riesz-on-l-infty} are classical and can be found in many monographs (see e.g.~\cite{Z:89,AH:96}). An elementary proof of~\eqref{E:riesz-on-l-1} can be obtained, after some simple modifications, from~\cite[Lemma~2.1]{Mal:02}. Then,~\eqref{E:riesz-on-l-infty} follows by the self-adjoinness of the Riesz potential, and~\eqref{E:riesz-on-lorentz} as well as \eqref{E:riesz-on-lebesgue} by interpolating the former two.

A natural question, and, at the same time, one of the main motivations for the research to be seen below, is whether one can improve the target space in~\eqref{E:riesz-on-l-1} when the domain space $L^1(\rn)$ is replaced by the strictly smaller space $L^{1,q}(\rn)$ with $q\in(0,1)$ (note that here we call~\eqref{E:scaling-lorentz} into play once again). One of the principal consequences of the main results below is the \emph{negative} answer to this problem (hence the expression \emph{non-improvability} in the title). At the same time, we shall point out important features behind the scenes that govern such behavior of relatively general class of operators. 

The crucial ingredients will be the summing property of a Lorentz endpoint space and the extremal property of a certain privileged function in a Marcinkiewicz endpoint space. This deserves a more detailed explanation. 

Let us recall that one of the most important features of every (quasi-)normed rearrangement-invariant structure is its \emph{fundamental level}, determined by the behavior of the corresponding (quasi-)norm on  characteristic functions. This approach is enabled by the observation that, given a rearrangement-invariant functional $\|\cdot\|_X$ and $t$ in the range of the measure $\mu$, then one can define the value $\varphi_X(t)$ as $\|\chi_E\|_X$ regardless of the choice of $E$ as long as it is measurable and has measure equal to $t$. This $\varphi_X$ is then called the \emph{fundamental function} of $X$. The collection of all spaces sharing this parameter is called a fundamental level. 

On each fundamental level, one has two important structures with certain extremal properties, namely the \emph{Lorentz endpoint space} and the \emph{Marcinkiewicz enpoint space}. For instance, for a fixed $p\in(1,\infty)$, all the two-parameter Lorentz spaces $\{L^{p,q}\}$ with $q\in(0,\infty)$, belong to the same fundamental level. The corresponding endpoint spaces, which are the smallest and biggest normable spaces among them, are $L^{p,1}$ and $L^{p,\infty}$.

It has to be pointed out that the spaces $L^{1,q}$ with $q\in(0,1)$ require special care since they are not equivalently normable. This fact rules out use of some classical methods such as duality techniques which are often being applied when optimality of function spaces is in question, see e.g.~\cite{EKP:00,CPS:15,EMMP:20}. This makes the problem more interesting, and certainly substantially more difficult. 

One can, of course, also be interested in a problem which is in some sense `dual', namely to the question of optimality of the target space when the domain space is fixed. For the particular case of the Riesz potential, one would be interested whether there is any essential enhancement of the space $L^{\frac{n}{n-\gamma},\infty}(\rn)$ possible within the class of quasinormed spaces in~\eqref{E:riesz-on-l-1}. 

Finally, since the behavior of the Riesz potential is typical for many other important operators of harmonic analysis in far more general settings, it is worth to study the problem from a broader point of view. That is exactly our mission here. 

In this paper, we develop two new and rather abstract  methods for resolving questions about (non-)improvability of function spaces in sharp estimates of sublinear operators.
In a way, the methods tackle the problem from mutually opposite directions. In order to keep the theory at reasonable accessibility, we restrict ourselves to the situation when either the domain space is a Lorentz endpoint space or the target space is a Marcinkiewicz endpoint space (detailed definitions of these and other relevant notions will be given in the course of explanation below).

The first approach is based on a careful axiomatization and subsequent search of minimal conditions under which the optimal target space cannot be essentially enhanced despite substantial shrinking of the domain space along its fundamental level once the critical point has been crossed (in the case of the Riesz potential, the critical point is the space $L^1(\rn)$). 

We begin by introducing a new abstract notion of optimality of the target space within certain \emph{compatible triple} consisting of an assortment of quasi-Banach lattices, the operator in question and the fixed domain space. Our first main result (Theorem~\ref{thm:nonimprovability_Lorentz_endpoint}) states that under certain approximation-type requirements (which have to be axiomatized), the optimal partner target space of a Lorentz endpoint space remains optimal even when the domain space is replaced by any of the strictly smaller spaces having comparable fundamental functions. Since the situation described by~\eqref{E:riesz-on-l-1} is a particular case of that described by the statement of~Theorem~\ref{thm:nonimprovability_Lorentz_endpoint}, we get, as a consequence, the negative answer to the problem concerning the Riesz potential stated above (see~Example~\ref{exam:nonimprovability_of_riesz}).

Our second approach is based on adopting a completely different point of view, calling into play special properties of one extremal function in a Marcinkiewicz space. We introduce a 
pair of operators, one involving integration and the other using the operation of taking supremum. The introduction of these operators is not entirely new as it arises from the classical theory of joint weak-type operators, corroborated in~\cite{BR:80,BS}, see also~\cite{C:66}, and its various later modifications suitable for operators with nonstandard behavior, which was first developed in~\cite{Gog:09}, and then further studied e.g.~in~\cite{Mal:12,Bae:22,Mih:25,Kub:25}. Nevertheless, it seems that the idea of exploiting these, in a way extremal, operators to establish optimality within a large class of merely quasi-normable spaces is new. In the main results of this second part, namely Theorems~\ref{thm:nonimprovability_Calderon_endpoint_R} and~\ref{thm:nonimprovability_Calderon_endpoint_H}, we formulate other, completely different, sufficient conditions for the non-improvability of the sharp target space, which maintains its optimality even though significantly more function spaces compete for its place.

As a general framework for the study of problems we intend to pursue, we introduce a category of quasi-Banach lattices which are \emph{Fatou-representable} (see~Definition~\ref{def:Fatouably_repre_lattice}). It turns out that this class is extraordinarily satisfactory and enables us to obtain nonimprovability results in a surprisingly general setup. The example of the action of the Riesz potential on $L^1(\rn)$ mentioned above is, once again, a particular example, marking a common point of the two approaches.

As a final remark, let us point out that while we show here that \eqref{E:riesz-on-l-1} does not improve in the Lorentz space scale when $I_\gamma$ acts on $L^{1,q}$-functions, $q<1$, it is known that the Riesz potential acting on $L^1$-vector fields with additional \emph{cancelling} conditions, such as curl-freeness, does map into finer Lorentz spaces such as $L^{\frac{n}{n-\gamma},1}$, cf.    \cite{SSVS17,Spector20,Stolyarov22,HRS23}.

\section{Results and proofs}

Throughout this text, $\RM$ and $\SN$ stand for $\sigma$-finite nonatomic measure spaces. Unless necessary, no further specification will be given at each occurrence. Where distinct domain and target function spaces for an operator are involved, we use $\RM$ and $\SN$ on the domain and target sides, respectively.
Given such a measure space $\RM$, we will denote by $\M\RM$ the collection of all $\mu$-measurable functions $f\colon\RR\to[-\infty,\infty]$. By $\MOR$
we shall denote the subset of $\M\RM$ consisting of those functions which are finite $\mu$-a.e.~on $\RR$.

For a functional $\|\cdot\|_{X}\colon\M\RM\to[0,\infty]$, we denote by $X\RM$ (or $X$ for short) the set of all $f\in\M\RM$ such that $\|f\|_{X}<\infty$. We say that $X$ is a \emph{(quasi-)Banach lattice} if $X\subseteq \MOR$, $\|\cdot\|_X$ is a (quasi-)norm on $X$, it is complete with respect to the (quasi-)metric $\|f-g\|_{X}$, and the \emph{lattice property} holds in the sense that if $f\in\M\RM$, $g\in X$ and $|f|\le |g|$ $\mu$-a.e., then $f\in X$ and $\|f\|_X\le\|g\|_X$. 

We say that a collection of (quasi-)Banach lattices $\C$ is \emph{compatible} if there is a $\sigma$-finite nonatomic measure space $\RM$ such that each $X\in\C$ is contained in $\MOR$.

If $E\subseteq\RR$ is a $\mu$-measurable set, we denote the characteristic function of $E$ by $\chi_E$.

\begin{convention}\label{conv:qnorm_extension}
Given a (quasi-)Banach lattice $X$ over $\RM$, its (quasi-)norm $\|\cdot\|_X$ is a functional defined on $X$ with values in $[0, \infty)$. While $\|f\|_X$ may or may not make formal sense for $f\in \M\RM\setminus X$, from the point of view of $X$, $\|\cdot\|_X$ is defined only for $f\in X$. In this paper, we implicitly assume that $\|\cdot\|_X$ is extended (or redefined) to be defined for all $f\in\M\RM$ and set
\begin{equation*}
    \|f\|_X = \infty \quad \text{for $f\in \M\RM\setminus X$}.
\end{equation*}
With this, for every $f\in\M\RM$, we have
\begin{equation*}
    f\in X \quad \text{if and only if} \quad \|f\|_X < \infty.
\end{equation*}
\end{convention}

Note that in accordance with Convention~\ref{conv:qnorm_extension}, we handle subspaces of (quasi-)normed linear spaces in a slightly modified way than usual. For example, we treat $\|\chi_{\RR}\|_{L^\infty_0\RM}$ as $\infty$ when $\mu(\RR) = \infty$. By $L^\infty_0\RM$, we denote the subspace of $L^\infty\RM$ consisting of functions $f\in L^\infty\RM$ decaying at infinity in the sense that
\begin{equation*}
    \mu(\{x\in\RR : |f(x)| > \lambda\}) < \infty \quad \text{for every $\lambda > 0$}.
\end{equation*}

\begin{definition}
    We say that $(T,X,\C)$ is a \emph{compatible triple} if $\C$ is a compatible collection of quasi-Banach lattices, $X$ is a quasi-Banach lattice, and $T$ is a sublinear operator defined on $X$ having values in $\M\SN$, where $\SN$ is the common measure space for the collection $\C$.
\end{definition}

\begin{definition}
Let $(T,X,\C)$ be a compatible triple. We say that $Y\in \C$ is the \emph{optimal target for $(T,X,\C)$} if $T\colon X \to Y$ is bounded and simultaneously $Y\hookrightarrow Z$ for every $Z\in\C$ such that $T\colon X \to Z$ is bounded.
\end{definition}

A function $s\colon\RR\to\R$ will be called \emph{$\mu$-simple} (or \emph{simple} for short when the choice of the measure is clear from the context) if it can be written in the form of a finite sum \begin{equation*}
    s=\sum_{k=1}^{N}\alpha_k\chi_{E_k}, 
\end{equation*}
in which $N\in\N$ and for each $k\in\{1,\dots,N\}$, one has $\alpha_k>0$ and $E_k$ is a $\mu$-measurable subset of $\RR$ of finite measure such that
\begin{equation*}
    E_1\subseteq E_2 \subseteq \cdots \subseteq E_N\subseteq \RR.
\end{equation*} 

\begin{definition}\label{def:compatible_triple}
Let $(T,X,\C)$ be a compatible triple. We say that $T$ is \emph{simply approximable from above in $(T,X,\C)$} if $X$ contains all simple functions, for every $f\in X$ there is a sequence $\{s_j\}_{j=1}^\infty$ of simple functions  such that
\begin{equation*}
    \limsup_{j\to \infty} \|s_j\|_X \leq \|f\|_X,
\end{equation*}
and for each $Y\in\C$ there is a constant $C_Y>0$ such that
    \begin{equation*}
        \|Tf\|_{Y} \leq C_Y \limsup_{j\to\infty} \|Ts_j\|_Y.
    \end{equation*}
\end{definition}

A natural example of a compatible triple which is simply approximable from above consists of a convolution operator $T$ on $\rn$ with a nonnegative measurable kernel, a collection $\C$ of all quasi-Banach function spaces (in the sense of the definition from~\cite{NP:24}), and a quasi-Banach function space $X$. To ensure that $T$ is well defined on $X$, the convolution kernel should satisfy certain conditions depending on $X$. Instead of diving into the inherent technicalities of the general situation, let us mention a concrete example relevant to our principal question regarding \eqref{E:riesz-on-l-1}. If $X=L^{1,q}(\rn)$, $q\in(0,1]$, then a reasonable requirement is that the kernel belongs to the weak space $L^{r,\infty}(\rn)$ for some $r\in(1, \infty)$. In particular, one can take the Riesz potential $I_\gamma$ for the operator $T$, in which case the kernel belongs to $L^{\frac{n}{n-\gamma},\infty}(\rn)$.


For a function $f\in\M\RM$, we shall denote by $f_*$ its \emph{distribution function}, given by
\begin{equation*}
    f_*(\lambda)=
    \mu\left(\left\{x\in\RR:|f(x)|>\lambda\right\}\right)
    \quad\text{for $\lambda\in[0,\infty)$,}
\end{equation*}
and by $f^*$ its \emph{nonincreasing rearrangement}, defined as
\begin{equation*}
    f^*(t)=\inf\left\{\lambda\ge0:f_*(\lambda)\le t\right\}
    \quad\text{for $t\in[0,\infty)$.}
\end{equation*}
When we need to specify the measure with respect to which the rearrangement is taken, we shall write $f^*_{\mu}$, etc.

Observe that for a $\mu$-measurable subset $E$ of $\RR$, one has
\begin{equation*}
    (\chi_{E})^{*}=\chi_{[0,\mu(E))}.
\end{equation*}
It will also be useful to note that for $p,q\in(0,\infty]$, the functional governing the Lorentz space $L^{p,q}$ can be equivalently rewritten, with the help of the nonincreasing rearrangement, as
\begin{equation*}
    \|f\|_{p,q}
    =
    \|t^{\frac1p-\frac1q}f^*(t)\|_{L^q(0, \infty)},
\end{equation*}
in which $\|\cdot\|_{L^q(0, \infty)}$ stands for the ordinary Lebesgue $L^q$-(quasi-)norm.

We will say that a functional $\|\cdot\|\colon\M\RM\to[0,\infty]$ is \emph{rearrangement-invariant} if $\|f\|=\|g\|$ whenever $f^*=g^*$.

Note that every Lorentz (quasi-)norm is a rearrangement-invariant functional on $\M\RM$. 

\begin{definition}
    \label{D:fundamental-function}
    Let $\|\cdot\|_X\colon\M\RM\to[0,\infty]$ be a rearrangement-invariant functional.
    Let $X$ be the collection of all elements $f$ of $\M\RM$ such that $\|f\|_X<\infty$. Then we define the \emph{fundamental function} $\varphi_X\colon[0,\mu(\RR))\to[0,\infty]$ of $X$ by 
    \begin{equation*}
        \varphi_X(t)=\|\chi_E\|_X,
    \end{equation*}
    where $t\in[0,\mu(\RR))$ and $E$ is an arbitrary $\mu$-measurable subset of $\RR$ with $\mu(E)=t$.
\end{definition}

It can be easily verified that 
\begin{equation*}
    \varphi_{L^{p,q}\RM}(t)
    =
    c_{p,q} t^{\frac{1}{p}}
    \quad\text{for $t\in[0,\infty)$,}
\end{equation*}
for every $p\in(0,\infty]$ and 
either $q\in(0,\infty]$ if $p<\infty$ or $q=\infty$ if $p=\infty$, where $c_{p,q} = (q/p)^{1/q}$. The multiplicative constant is to be interpreted as $1$ when $q = \infty$.

\subsection{Nonimprovability of target for Lorentz endpoint domain space}

\begin{definition}
    Given a nontrivial concave function $\varphi\colon [0, \mu(\RR)) \to [0, \infty)$, we define the \emph{Lorentz endpoint space} $\Lambda_\varphi\RM$ as the collection of all functions $f\in\M\RM$ such that $\|f\|_{\Lambda_\varphi\RM}<\infty$, where 
    $\|\cdot\|_{\Lambda_\varphi\RM}\colon\M\RM\to[0,\infty]$ is the functional defined as
    \begin{equation*}
        \|f\|_{\Lambda_\varphi\RM}
        =
        \int_{0}^{\mu(\RR)}
        f^*(t)\,d\varphi(t)
    \end{equation*}
    for $f\in\M\RM$, in which the Lebesgue--Stieltjes integral is applied.
\end{definition}
The Lorentz endpoint space $\Lambda_\varphi\RM$ is a Banach lattice containing simple functions (in fact, it is a rearrangement-invariant Banach function space in the sense of \cite{BS}), and its fundamental function satisfies
\begin{equation}\label{E:Lorentz_endpoint_fund_func}
    \varphi_{\Lambda_\varphi} = \varphi.
\end{equation}

Note that in the most prominent special case when $\varphi(t)=t^{\frac{1}{p}}$ with $p\in[1,\infty)$, one has 
\begin{equation*}
    \Lambda_\varphi\RM=L^{p,1}\RM.
\end{equation*}
Here and throughout, by stating $X=Y$ for a pair of (quasi-)Banach lattices $X$ and $Y$, we mean that, as sets, they contain the same elements, and at the same time, their (quasi-)norms are equivalent in the sense that there exists a positive constant $C$ such that for all $f\in\M\RM$, one has
\begin{equation*}
    C^{-1}\|f\|_X
    \le \|f\|_Y
    \le C\|f\|_X.
\end{equation*}
We will say that a quasi-Banach lattice is \emph{normable} if it is equal to a normed space.   

\begin{theorem}\label{thm:nonimprovability_Lorentz_endpoint}
Let $\varphi\colon [0, \mu(\RR)) \to [0, \infty)$ be a nontrivial concave function. Set $X = \Lambda_\varphi\RM$. Let $(T,X,\C)$ be a compatible triple. Assume that each $Y\in\C$ is normable and that $T$ is simply approximable from above in $(T,X,\C)$. Let $W$ be a rearrangement-invariant quasi-Banach lattice over $\RM$ containing simple functions such that
\begin{equation}\label{E:nonimprovability_Lorentz_endpoint:contained_in_Lorentz}
   W\hookrightarrow X 
\end{equation}
and
\begin{equation}\label{E:nonimprovability_Lorentz_endpoint:equiv_fund_func}
    \sup_{t\in(0, \mu(\RR))} \frac{\varphi_W(t)}{\varphi(t)} < \infty.
\end{equation}
If $Y\in\C$ is the optimal target for $(T,X,\C)$, then $Y$ is also the optimal target for $(T,W,\C)$.
\end{theorem}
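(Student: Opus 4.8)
The plan is to verify, one at a time, the two conditions defining ``$Y$ is the optimal target for $(T,W,\C)$'' (note first that $(T,W,\C)$ is indeed a compatible triple, since $W$ is a quasi-Banach lattice, $W\subseteq X$ by~\eqref{E:nonimprovability_Lorentz_endpoint:contained_in_Lorentz}, and hence $T$ restricts to $W$ with values in $\M\RM$). Boundedness of $T\colon W\to Y$ is immediate: by~\eqref{E:nonimprovability_Lorentz_endpoint:contained_in_Lorentz} together with the boundedness of $T\colon X\to Y$ (which holds because $Y$ is the optimal target for $(T,X,\C)$), we get $\|Tf\|_Y\le C\|f\|_X\le C'\|f\|_W$ for $f\in W$. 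The heart of the matter is optimality. So fix $Z\in\C$ with $T\colon W\to Z$ bounded; the goal is $Y\hookrightarrow Z$. Since $Y$ is already known to be optimal for $(T,X,\C)$, it suffices to show that $T\colon X\to Z$ is bounded, and we shall deduce this from the assumed boundedness of $T$ on the smaller domain $W=\Lambda_\varphi\RM$.

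To carry this out I would first reduce to simple functions. Because $T$ is simply approximable from above in $(T,X,\C)$, an estimate $\|Ts\|_Z\le C\|s\|_X$ valid for all simple $s$ with $C$ independent of $s$ immediately yields, for arbitrary $f\in X$ and an approximating sequence $\{s_j\}$ as in Definition~\ref{def:compatible_triple}, $\|Tf\|_Z\le C_Z\limsup_j\|Ts_j\|_Z\le CC_Z\limsup_j\|s_j\|_X\le CC_Z\|f\|_X$. So let $s=\sum_{k=1}^N\alpha_k\chi_{E_k}$ with $\alpha_k>0$ and $E_1\subseteq\cdots\subseteq E_N$. On the domain side, the \emph{summing property} of the Lorentz endpoint space gives $\|s\|_X=\sum_{k=1}^N\alpha_k\varphi(\mu(E_k))$ --- this is just linearity of the Lebesgue--Stieltjes integral in the integrand $s^*=\sum_{k=1}^N\alpha_k\chi_{[0,\mu(E_k))}$, together with~\eqref{E:Lorentz_endpoint_fund_func} (when $\mu(\RR)<\infty$ the value at $\mu(\RR)$ is read as a limit, finite by concavity). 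On the target side, sublinearity and positive homogeneity of $T$ give the pointwise bound $|Ts|\le\sum_{k=1}^N\alpha_k|T\chi_{E_k}|$, whence, using the lattice property of $Z$ and --- crucially --- its normability (pass to an equivalent \emph{norm}, whose triangle inequality costs a multiplicative constant \emph{independent of $N$}), $\|Ts\|_Z\le C\sum_{k=1}^N\alpha_k\|T\chi_{E_k}\|_Z$. Finally, each $\chi_{E_k}$ is simple, hence belongs to $W$, so boundedness of $T\colon W\to Z$ and~\eqref{E:nonimprovability_Lorentz_endpoint:equiv_fund_func} give $\|T\chi_{E_k}\|_Z\le\|T\|_{W\to Z}\|\chi_{E_k}\|_W=\|T\|_{W\to Z}\varphi_W(\mu(E_k))\le C\varphi(\mu(E_k))$. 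Chaining the three estimates produces $\|Ts\|_Z\le C\sum_{k=1}^N\alpha_k\varphi(\mu(E_k))=C\|s\|_X$, which is exactly what was needed, and the optimality of $Y$ for $(T,X,\C)$ then forces $Y\hookrightarrow Z$.

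The step that I expect to be the real point --- and which explains the shape of the hypotheses --- is the inequality $\|Ts\|_Z\le C\sum_{k=1}^N\alpha_k\|T\chi_{E_k}\|_Z$. Here normability of the members of $\C$ is indispensable: for a merely quasi-normed $Z$, iterating the quasi-triangle inequality over $N$ summands yields a constant that blows up with $N$, and the argument collapses. This is exactly the asymmetry being exploited: the domain spaces competing for optimality (such as $L^{1,q}(\rn)$ with $q\in(0,1)$) are allowed to be non-normable, because on the specific space $X=\Lambda_\varphi$ the norm of a simple function is \emph{honestly equal} to $\sum_k\alpha_k\varphi(\mu(E_k))$, not merely comparable, so no normability of the domain is used. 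The remaining points are routine: $T0=0$ by sublinearity, so terms with $\mu(E_k)=0$ drop out; $\chi_{E_k}\in W$ since $W$ contains simple functions, which also makes $\varphi_W$ finite on $[0,\mu(\RR))$; and, when $\mu(\RR)<\infty$, a trivial limiting argument covers the endpoint $t=\mu(\RR)$ that is not included in the supremum in~\eqref{E:nonimprovability_Lorentz_endpoint:equiv_fund_func}.
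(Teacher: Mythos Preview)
Your proposal is correct and follows essentially the same route as the paper's own proof: reduce $Y\hookrightarrow Z$ to the boundedness of $T\colon X\to Z$, then use simple approximability to reduce to simple $s=\sum_k\alpha_k\chi_{E_k}$, apply sublinearity of $T$ together with the (equivalent) \emph{norm} on $Z$ to split the sum, bound each $\|T\chi_{E_k}\|_Z$ via $T\colon W\to Z$ and~\eqref{E:nonimprovability_Lorentz_endpoint:equiv_fund_func}, and reassemble using the summing identity $\|s\|_{\Lambda_\varphi}=\sum_k\alpha_k\varphi(\mu(E_k))$. Your additional remarks on why normability of the target spaces is the indispensable hypothesis are accurate and match the spirit of the paper.
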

\begin{proof}
We start by setting
\begin{equation}\label{E:nonimprovability_Lorentz_endpoint:1}
    C_1 = \sup_{t\in(0, \mu(\RR))} \frac{\varphi_W(t)}{\varphi(t)} < \infty.
\end{equation}
By combining \eqref{E:nonimprovability_Lorentz_endpoint:contained_in_Lorentz} and the fact that $T\colon X \to Y$ is bounded, we see that $T\colon W \to Y$ is also bounded. Let $Z\in\C$ be such that $T\colon W \to Z$ is bounded. Set
\begin{equation*}
    C_2 = \|T\|_{W\to Z} < \infty.
\end{equation*}
We need to show that $Y\hookrightarrow Z$. Since $Z$ is normable, we may assume without loss of generality that $\|\cdot\|_Z$ is a norm. Let $f\in X$. Furthermore, since $T$ is simply approximable from above in $(T,X,\C)$, there are a sequence of nonnegative simple functions $\{s_j\}_{j = 1}^\infty\subseteq \MOR$ and a constant $C_Z>0$ such that
\begin{equation}\label{E:nonimprovability_Lorentz_endpoint:2}
    \limsup_{j\to\infty} \|s_j\|_{X} \leq \|f\|_X
\end{equation}
and
\begin{equation}\label{E:nonimprovability_Lorentz_endpoint:3}
    \|Tf\|_{Z} \leq C_Z \limsup_{j\to\infty} \|Ts_j\|_Z.
\end{equation}
Fix $j\in\N$. Let $s_j = \sum_{k=1}^N \alpha_k \chi_{E_k}$, where $N\in\N$, $\alpha_k>0$ for every $k\in\{1, \dots, N\}$ and $E_1\subseteq E_2 \subseteq \cdots \subseteq E_N\subseteq \RR$ are $\mu$-measurable, each of finite measure. Now, for every $k\in\{1, \dots, N\}$, using the fact that $T\colon W \to Z$ is bounded (together with the fact that $W$ contains simple functions) and \eqref{E:nonimprovability_Lorentz_endpoint:1} together with \eqref{E:Lorentz_endpoint_fund_func}, we obtain
\begin{align*}
    \|T\chi_{E_k}\|_Z \leq C_2\|\chi_{E_k}\|_W \leq C_1 C_2\|\chi_{E_k}\|_X.
\end{align*}
Hence, thanks to the sublinearity of $T$ and the lattice property of $Z$, we have
\begin{align*}
    \|Ts_j\|_Z &\leq \Big\| \sum_{k=1}^N \alpha_k T\chi_{E_k} \Big\|_Z \leq \sum_{k=1}^N \alpha_k \| T\chi_{E_k} \|_Z \\
    &\leq  C_1 C_2 \sum_{k=1}^N \alpha_k \| \chi_{E_k} \|_X = C_1 C_2 \sum_{k=1}^N  \int_0^{\mu(\RR)} \alpha_k\chi_{[0, \mu(E_k))}(t) \dd\varphi(t)  \\
    &= C_1 C_2 \int_0^{\mu(\RR)} \Big( \sum_{k=1}^N \alpha_k \chi_{E_k} \Big)^*_\mu(t) \dd\varphi(t) \\
    &= C_1 C_2 \|s_j\|_X.
\end{align*}
Combining this with \eqref{E:nonimprovability_Lorentz_endpoint:3} and \eqref{E:nonimprovability_Lorentz_endpoint:2}, we obtain
\begin{equation*}
    \|Tf\|_{Z} \leq C_1 C_2 C_Z \limsup_{j\to\infty} \|s_j\|_X \leq C_1 C_2 C_Z \|f\|_{X}.
\end{equation*}
Since $f\in X$ was arbitrary, it follows that $T\colon X \to Z$ is bounded. Finally, combining this with the fact that $Y$ is the optimal target for $(T,X,\C)$, we obtain the desired embedding $Y\hookrightarrow Z$, which concludes the proof.
\end{proof}

\begin{remark}
    The assumptions \eqref{E:nonimprovability_Lorentz_endpoint:contained_in_Lorentz} and \eqref{E:nonimprovability_Lorentz_endpoint:equiv_fund_func} imply that the fundamental function of the rearrangement-invariant quasi-Banach lattice $W$ is equivalent to that  of $X = \Lambda_\varphi$, which, in turn, is equal to $\varphi$.
\end{remark}

As the following corollary shows, we may push the conclusion of Theorem~\ref{thm:nonimprovability_Lorentz_endpoint} further by enriching the class of spaces competing for being the optimal target with some quasi-Banach lattices, which need not be normable.
\begin{corollary}\label{cor:nonimprovability_Lorentz_endpoint_and_nonenlargibility_of_possible_targets}
Let $\varphi$, $(T,X,\C)$, and $W$ be as in Theorem~\ref{thm:nonimprovability_Lorentz_endpoint}. Let $Y\in\C$ be the optimal target for $(T,X,\C)$. Assume that $\C_1$ is a collection of quasi-Banach lattices such that $\C_2 = \C \cup \C_1$ is compatible and for each $Z\in \C_1$ there is $\tilde{Z}\in \C$ such that $Z\hookrightarrow \tilde{Z} \subsetneq Y$. Then $Y$ is the optimal target for $(T,W,\C_2)$.
\end{corollary}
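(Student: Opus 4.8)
The plan is to bootstrap directly from Theorem~\ref{thm:nonimprovability_Lorentz_endpoint}. That theorem already yields that $Y$ is the optimal target for $(T,W,\C)$; in particular $T\colon W\to Y$ is bounded, and $Y\hookrightarrow Z$ whenever $Z\in\C$ and $T\colon W\to Z$ is bounded. Since $Y\in\C\subseteq\C_2$ and $\C_2$ is compatible (so that $(T,W,\C_2)$ is again a compatible triple, realized over the same $\RM$), proving the corollary amounts to checking that $Y\hookrightarrow Z$ for every $Z\in\C_2$ with $T\colon W\to Z$ bounded. For $Z\in\C$ this is precisely the optimality assertion just quoted, so the only new content concerns $Z\in\C_1$.

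I would therefore fix an arbitrary $Z\in\C_1$ for which $T\colon W\to Z$ is bounded, and pick the $\tilde Z\in\C$ furnished by the hypothesis, so that $Z\hookrightarrow\tilde Z\subsetneq Y$. Composing the embedding $Z\hookrightarrow\tilde Z$ with the bounded operator $T\colon W\to Z$ shows that $T\colon W\to\tilde Z$ is bounded; since $\tilde Z\in\C$, the optimality of $Y$ for $(T,W,\C)$ forces $Y\hookrightarrow\tilde Z$. But an embedding $Y\hookrightarrow\tilde Z$ is incompatible with $\tilde Z\subsetneq Y$ (together they would give $\tilde Z=Y$). Hence no member of $\C_1$ can be a target of $T$ on $W$ at all, so the case $Z\in\C_1$ is vacuous, and combining the two cases we conclude that $Y$ is the optimal target for $(T,W,\C_2)$.

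I do not expect a genuine obstacle here; the statement is essentially a bookkeeping consequence of Theorem~\ref{thm:nonimprovability_Lorentz_endpoint}. The only points deserving a little care are verifying that $(T,W,\C_2)$ is a bona fide compatible triple over the common measure space $\RM$ (which is exactly what compatibility of $\C_2$ provides), and reading $\tilde Z\subsetneq Y$ as a strict embedding, so that it genuinely rules out the reverse embedding $Y\hookrightarrow\tilde Z$ obtained from optimality.
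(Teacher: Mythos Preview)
Your proposal is correct and follows essentially the same argument as the paper: invoke Theorem~\ref{thm:nonimprovability_Lorentz_endpoint} to get optimality of $Y$ for $(T,W,\C)$, then show that any $Z\in\C_1$ with $T\colon W\to Z$ bounded would, via $Z\hookrightarrow\tilde Z$, force $Y\hookrightarrow\tilde Z$ and hence contradict $\tilde Z\subsetneq Y$. The paper phrases the contradiction as $\tilde Z\subsetneq\tilde Z$ rather than $\tilde Z=Y$, but the content is identical.
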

\begin{proof}
    By Theorem~\ref{thm:nonimprovability_Lorentz_endpoint}, we know that $Y$ is the optimal target for $(T,W,\C)$. In particular, $T\colon W\to Y$ is bounded. Let $Z\in\C_2$ be such that $T\colon W \to Z$ is bounded. We need to show that $Z\in\C$, whence it follows that $Y\hookrightarrow Z$, as desired. Suppose that $Z\in\C_1$, and let $\tilde{Z}\in \C$  be such that $Z\hookrightarrow \tilde{Z} \subsetneq Y$. Hence, $T\colon W \to \tilde{Z}$ is bounded. Since $\tilde{Z}\in \C$ and $Y$ is the optimal target for $(T,W,\C)$, it follows that $Y\hookrightarrow\tilde{Z}$. However, the combination of $Z\hookrightarrow \tilde{Z} \subsetneq Y$ and $Y\hookrightarrow\tilde{Z}$ implies that $\tilde{Z}\subsetneq \tilde{Z}$, which is clearly not possible. Hence, $Z$ must belong to $\C$, which concludes the proof.
\end{proof}

\begin{remark}
If both upper limits in the definition of the simple approximability from above are replaced by the respective lower limits, the conclusion of Theorem~\ref{thm:nonimprovability_Lorentz_endpoint} (and also of Corollary~\ref{cor:nonimprovability_Lorentz_endpoint_and_nonenlargibility_of_possible_targets}) is still valid, with only a few obvious changes needed in the proof.
\end{remark}

\begin{example}\label{exam:nonimprovability_of_riesz}
    Let $n\in\mathbb N$, $\gamma\in(0,n)$, $p\in[1,\frac{n}{\gamma}]$, and $q\in(0,1]$. Then one has
    \begin{equation}
        \label{E:riesz-on-lorentz-nonimprovable}
        I_{\gamma}\colon L^{p,q}(\rn)\to 
        \begin{cases}
            L^{\frac{np}{n-\gamma p},
            \infty}(\rn)
            &\text{if $p=1$,}
                \\
            L^{\frac{np}{n-\gamma p},1}(\rn)
            &\text{if $p\in(1,\frac{n}{\gamma})$,}
                \\
            L^{\infty}(\rn)
            &\text{if $p = \frac{n}{\gamma}$,}
            \end{cases}
    \end{equation}
    and the target space is, in each case, the optimal rearrangement-invariant Banach function space which renders~\eqref{E:riesz-on-lorentz-nonimprovable} true. In particular, the target space cannot be improved by decreasing the value of the parameter $q$ below $1$ (note that the target space is in all cases independent of $q$).
\end{example}

\begin{proof}
    The assertion is a simple consequence of Theorem~\ref{thm:nonimprovability_Lorentz_endpoint}, in which we subsequently consider $\varphi(t)=t^{\frac{1}{p}}$, $X=L^{p,1}(\rn)$, $T=I_{\gamma}$, $\mathcal C$ is the collection of all rearrangement-invariant Banach function spaces (it is important to note that we require here that these spaces are \emph{normed}), $Y$ is $L^{\frac{np}{n-\gamma p},\infty}(\rn)$, $L^{\frac{np}{n-\gamma p},1}(\rn)$, or $L^\infty(\rn)$ (depending on whether $p=1$, $p\in(1, \frac{n}{\gamma})$, or $p = \frac{n}{\gamma}$, respectively), and $W=L^{p,q}(\rn)$. 
    One only has to notice that both conditions~\eqref{E:nonimprovability_Lorentz_endpoint:contained_in_Lorentz}   and~\eqref{E:nonimprovability_Lorentz_endpoint:equiv_fund_func} are clearly satisfied. The optimality of $Y$ within the class of all rearrangement-invariant Banach function spaces was shown in \cite[Theorem~6.5]{EMMP:20}.
\end{proof}

\begin{remark}
    Example~\ref{exam:nonimprovability_of_riesz} shows how the abstract result of Theorem~\ref{thm:nonimprovability_Lorentz_endpoint} can be applied to obtain nonimprovability of the target space in a familiar situation. Interestingly, only its versions for $p=1$ and $p = \frac{n}{\gamma}$ can also be recovered by the second approach, which will be treated in Subsection~\ref{SS:calderonable} below. For the case $p\in(1, \frac{n}{\gamma})$, our second approach does not work.

    Furthermore, using Corollary~\ref{cor:nonimprovability_Lorentz_endpoint_and_nonenlargibility_of_possible_targets}, we can show that the target space in~\eqref{E:riesz-on-lorentz-nonimprovable} is in fact optimal within a larger class than that of all rearrangement-invariant Banach function spaces. Indeed, we can enlarge the collection of competing spaces with all quasi-Banach lattices $Z(\rn)$ for which there is a rearrangement-invariant Banach function space $\tilde{Z}(\rn)$ such that $Z(\rn) \hookrightarrow\tilde{Z}(\rn) \subsetneq Y_p(\rn)$, where $Y_p(\rn)$ is the target space from \eqref{E:riesz-on-lorentz-nonimprovable}.
\end{remark}

We will now present a more sophisticated (hence naturally more abstract) example, in which we shall employ the Hausdorff measure and some knowledge about traces of Sobolev functions. Resorting to the trace theory enforces us to require that $mp$ is an integer, which might seem unnatural. However, currently available methods do not seem to allow generalization to a noninteger case.

\begin{example}\label{exam:nonimprovability_of_KK_emb}
Let $1<p<n/m$, $m,n\in\N$, $m<n$, $mp\in\N$, and consider the Riesz potential $I_m$.
Let $E$ be the nonempty intersection of a $(n-mp)$-dimensional affine subspace of $\rn$ with a bounded open set in $\rn$.
Let $W$ be any rearrangement-invariant quasi-Banach lattice over $\rn$ containing simple functions such that $W\hookrightarrow L^{p,1}(\rn)$ and whose fundamental function is equivalent to the function $t\mapsto t^\frac1{p}$. Let
\begin{equation*}
    \C_1 = \{Y: Y = Y(E,\HH^{n-mp})\ \text{is an r.i.~space}\}
\end{equation*}
and
\begin{equation*}
    \C_2 = \Bigg\{Z: \begin{tabular}{@{}c@{}}
$Z = Z(E,\HH^{n-mp})$ is a quasi-Banach lattice for which exists an\\
r.i.~space $\tilde{Z}=\tilde{Z}(E,\HH^{n-mp})$ such that $Z\hookrightarrow \tilde{Z}\subsetneq L^p(E,\HH^{n-mp})$
\end{tabular} \Bigg\}.
\end{equation*}
Then the Lebesgue space $L^p(E,\HH^{n-mp})$ is the optimal target for $(I_m, W, \C)$, where $\C = \C_1 \cup \C_2$.
\end{example}

\begin{proof}
    We start by showing that the Lebesgue space $L^p(E,\HH^{n-mp})$ is the optimal target for $(I_m, L^{p,1}(\rn), \C_1)$. To this end, the boundedness of $I_m\colon L^{p,1}(\rn) \to L^p(E,\HH^{n-mp})$ follows from \cite[Theorem~1.2]{KK:18}. The optimality of $L^p(E,\HH^{n-mp})$ among all possible target spaces from $\C_1$ was essentially proved in \cite[Proposition~3.4]{CPS:20}. Indeed, let $Y=Y(E,\HH^{n-mp})$ be an r.i.~space such that
    \begin{equation}\label{E:nonimprovability_of_KK_emb:1}
        I_m\colon L^{p,1}(\rn)\to Y(E,\HH^{n-mp})\ \text{is bounded}.
    \end{equation}
    A standard potential estimate (e.g., \cite[Theorem~1.1.10/2]{Mabook}) gives the existence of a constant $C>0$, depending only on $m$ and $n$, such that
    \begin{equation}\label{E:nonimprovability_of_KK_emb:2}
        |u(x)| \leq C I_m(|\nabla^m u|)(x) \quad\text{for all $x\in\rn$ and $u\in\mathcal C^\infty_0(\rn)$},
    \end{equation}
    where $\nabla^m u$  is the (arbitrarily arranged) vector of all $m$th order derivatives of $u$. By \eqref{E:nonimprovability_of_KK_emb:1} and \eqref{E:nonimprovability_of_KK_emb:2}, we have
    \begin{equation*}
        \|u\|_{Y(E,\HH^{n-mp})} \leq \tilde{C} \|\nabla^m u\|_{L^{p,1}(\rn)} \quad \text{for every $u\in\mathcal C^\infty_0(\rn)$},
    \end{equation*}
    where $\tilde{C}$ is a constant independent of $u$. Now, since $C^\infty_0(\rn)$ is dense in the Sobolev-Lorentz space $W^m L^{p,1}(\rn)$, a standard approximation argument shows that there is a bounded linear trace operator $\operatorname{Tr}\colon W^m_0 L^{p,1}(\Omega) \to Y(E,\HH^{n-mp})$, where $\Omega\subseteq\rn$ is the open bounded set from the statement. The desired embedding $L^p(E,\HH^{n-mp})\hookrightarrow Y(E,\HH^{n-mp})$ now follows from \cite[Proposition~3.4]{CPS:20} (we are using here the assumption that $mp\in\N$).
    
    Next, it is easy to see that the assumptions of Theorem~\ref{thm:nonimprovability_Lorentz_endpoint} are satisfied with $X=L^{p,1}(\rn)$, $T=I_m$ and $\C$ there equal to $\C_1$ here. To this end, note that $L^{p,1}(\rn) = \Lambda_\varphi(\rn)$ with $\varphi(t) = t^\frac1{p}$. As for the simple approximability from above, given $f\in L^{p,1}(\rn)$, we find a nondecreasing sequence of nonnegative simple functions $\{s_j\}_{j = 1}^\infty$ such that $s_j(x)\nearrow |f(x)|$ for a.e.~$x\in\rn$. Then
    \begin{equation*}
        I_m(s_j)(x) \nearrow I_m(|f|)(x) \quad \text{for every $x\in\rn$}
    \end{equation*}
    thanks to the monotone convergence theorem. Hence, we have
    \begin{equation*}
        \|I_m f\|_{Y(E,\HH^{n-mp})} \leq \|I_m(|f|)\|_{Y(E,\HH^{n-mp})} = \lim_{j\to\infty} \|I_m(s_j)\|_{Y(E,\HH^{n-mp})}
    \end{equation*}
    for every r.i.~space $Y\in\C_1$, and
    \begin{equation*}
        \|f\|_{L^{p,1}(\rn)} = \lim_{j\to\infty}\|s_j\|_{L^{p,1}(\rn)}.
    \end{equation*}

    Finally, the desired claim follows from Corollary~\ref{cor:nonimprovability_Lorentz_endpoint_and_nonenlargibility_of_possible_targets} with $\C$, $\C_1$, and $\C_2$ there equal to $\C_1$, $\C_2$, and $\C$ here, respectively.
\end{proof}

\subsection{Endpoint mappings of Calder\'on-able operators}
\label{SS:calderonable}

Recall that a (quasi-)Banach lattice $Z$ has the \emph{weak Fatou property} if there is a constant $C>0$ such that for every function $f\in\M\RM$ and every sequence $\{f_n\}_{n=1}^\infty\subseteq Z$ such that $0\leq f_n\nearrow f$ $\mu$-a.e.~on $\RR$ and $\sup_{n\in\N}\|f_n\|_Z<\infty$, we have $f\in Z$ and $\|f\|_Z\leq C \sup_{n\in\N}\|f_n\|_Z$.

\begin{definition}
\label{def:Fatouably_repre_lattice}
We say that a rearrangement-invariant quasi-Banach lattice $X$ over $\RM$ is \emph{Fatou-representable} if there is a rearrangement-invariant quasi-Banach lattice $\widebar{X}$ over $(0, \mu(\RR))$ such that    \begin{equation}\label{E:representability}
        \|f\|_{X} = \|f^*\|_{\widebar{X}} \quad \text{for every $f\in \M\RM$}
    \end{equation}
    and $\widebar{X}$ has the weak Fatou property.
\end{definition}

Let us recall that the representability of norm-like functionals in terms of a certain symmetrized functional acting over an interval is a basic property of various rearrangement-invariant structures, and that it constitutes one of the key features in the theory. The classical version of this result is the Luxemburg representation theorem for rearrangement-invariant Banach function spaces, which first appeared in~\cite{Lux:67}
and is more easily found in~\cite[Chapter~2, Theorem~4.10]{BS}. On the other hand, its variant for rearrangement-invariant \emph{quasi}-Banach function spaces had not been known until recently, see~\cite[Theorem~3.1]{Mus:25}.
In~\cite{Mus:25}, the history of the problem is also described in more detail. An example of a rearrangement-invariant Banach lattice that is not Fatou-representable is $X = L^\infty_0\RM$ when $\mu(\RR) = \infty$. To see this, consider an increasing sequence of sets $E_1\subseteq E_2\subseteq E_3\subseteq\cdots\subseteq\RR$ such that $\bigcup_{n=1}^\infty E_n = \RR$ and $\mu(E_n) < \infty$ for every $n\in\N$. Then $\{f_n = \chi_{E_n}\}_{n = 1}^\infty \subseteq X$, $f_n^* = \chi_{[0, \mu(E_n))}$ for every $n\in\N$, and $0\leq f_n^* \nearrow \chi_{[0, \infty)}$. Now, suppose that $X$ is Fatou-representable. Using \eqref{E:representability}, we obtain
\begin{equation*}
    \sup_{n\in\N}\|f_n^*\|_{\widebar{X}} = \sup_{n\in\N}\|f_n\|_X = 1.
\end{equation*}
Since $\widebar{X}$ has the weak Fatou property, it follows that
\begin{equation}\label{E:representability_counterex}
   \chi_{[0, \infty)}\in \widebar{X} \qquad \text{and} \qquad \|\chi_{[0, \infty)}\|_{\widebar{X}} < \infty.
\end{equation}
On the other hand, since $\chi_\RR$ does not belong to $X$, we have (recall Convention~\ref{conv:qnorm_extension})
\begin{equation*}
    \|\chi_\RR\|_X = \infty.
\end{equation*}
However, using \eqref{E:representability} again together with \eqref{E:representability_counterex}, we obtain
\begin{equation*}
    \|\chi_\RR\|_X = \|\chi_{\RR}^*\|_{\widebar{X}} = \|\chi_{[0, \infty)}\|_{\widebar{X}} < \infty,
\end{equation*}
reaching a contradiction.

We will now introduce a pair of operators, albeit each of completely different nature, that will play a key role in the sequel. 

\begin{definition}
    Let $\sigma=[p,q,m]\subseteq (0, \infty)\times(0, \infty]\times(0, \infty)$. We formally define the operators $R_\sigma$ and $S^0_\sigma$ as
    \begin{equation*}
        R_\sigma g(t) = t^{-\frac1{q}} \int_0^{t^m} g(s) s^{\frac1{p} - 1} \dd{s},\ g\in\M(0, \infty),\ t\in(0, \infty),
    \end{equation*}
    and
    \begin{equation*}
        S^0_\sigma g(t) = t^{-\frac1{q}} \sup_{0<s\leq t^m} g(s) s^{\frac1{p}},\ g\in\M(0, \infty),\ t\in(0, \infty).
    \end{equation*}
\end{definition}

Employing operators acting on functions defined on an interval has been a standard practice ever since the foundations of modern theory of interpolation were laid by Lions, Peetre, Calder\'on, Zygmund, Hunt, O'Neil and others. However, the nature of these operators has undergone some evolution in accordance with the particular task in mind. Classically, such a pair of operators is intimately connected with interpolation of operators of separate \emph{weak types} or of the so-called \emph{joint weak type}. The endpoint estimates of these operators reflect their boundedness from the Lorentz space $L^{p,1}$ into the weak Lebesgue space $L^{q,\infty}$ for appropriate values of $p$ and $q$. For such endpoint estimates, the corresponding pair of operators for a Calder\'on type theorem always consists of one integral operator of \emph{Hardy type} (acting `near zero') and one integral operator of \emph{Copson type} (acting `near infinity'), for details see~\cite[Chapter~3, Definition~5.1]{BS}. 

Since then, more delicate tasks have been investigated on boundedness properties of operators with nonstandard endpoint behavior. These involve, for instance, fractional maximal operators, Sobolev embeddings, and more. Such tasks enforce introduction of operators of various special kinds. In~\cite{Gog:09}, it was observed that some nonstandard operators require a combination of an integral operator of Hardy type with a non-integral operator, which instead of integration involves the operation of supremum. The latter operator notoriously makes things more interesting owing to the simple fact that it is no longer a linear operator. More complicated operators were encountered e.g.~in~\cite{Mal:12,Bae:22,Mih:25,Kub:25}. 

Note that in the classical situation of the operators of joint weak type, the corresponding pair of operators was uniquely tight to a~\emph{segment} $\sigma$ (involving four parameters). A similar situation is encountered here, but we only need a triple of parameters, which we will also denote $\sigma$. 

Finally, we are in a position to state and prove our last two principal results.

\begin{theorem}\label{thm:nonimprovability_Calderon_endpoint_R}
    Let $\sigma=[p,q,m]\subseteq (0, \infty)\times(0, \infty]\times(0, \infty)$. Let $T_\sigma$ be either $R_\sigma$ or $S^0_\sigma$. Let $X=X\RM$ be a rearrangement-invariant quasi-Banach lattice containing $\mu$-simple functions on $\RR$. Let $T$ be a positively homogeneous operator defined on $X$ with values in $\MOS$. Assume that there are constants $C,c>0$ and a sequence of $\mu$-measurable sets $\{E_j\}_{j =1}^\infty\subseteq\RR$ such that $\lim_{j\to \infty} \mu(E_j) = 0$ and
    \begin{equation}\label{E:nonimprovability_Calderon_endpoint_R:lower_bound}
        (T\chi_{E_j})^*_\nu(t) \geq C T_\sigma(\chi_{(0,a_j)})(ct) \quad \text{for all $t\in (0, \nu(\SSS))$ and $j\in\N$},
    \end{equation}
    where $a_j = \mu(E_j)$, $j\in\N$.

    If $T\colon X \to L^{q,\infty}\SN$ and
    \begin{equation}\label{E:nonimprovability_Calderon_endpoint_R:fund}
    \limsup_{t\to0^+} t^{-\frac1{p}}\varphi_X(t) < \infty,
    \end{equation}
    then $L^{q, \infty}\SN$ is the optimal target for $(T, X, \C)$, where
    \begin{equation}\label{E:nonimprovability_Calderon_endpoint_R:class_C}
    \C = \Bigg\{Y: \begin{tabular}{@{}c@{}}
$Y = Y\SN$ is a rearrangement-invariant quasi-Banach lattice\\
that is Fatou-representable
\end{tabular} \Bigg\}.
    \end{equation}
\end{theorem}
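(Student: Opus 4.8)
The plan is to verify the definition of the optimal target directly. First I would observe that $L^{q,\infty}\SN$ itself lies in the class $\C$ of \eqref{E:nonimprovability_Calderon_endpoint_R:class_C}: it is a rearrangement-invariant quasi-Banach lattice, and it is Fatou-representable with representing space $L^{q,\infty}(0,\nu(\SSS))$, which even enjoys the full Fatou property. Since $T\colon X\to L^{q,\infty}\SN$ is bounded by hypothesis, what remains is to fix any $Y=Y\SN\in\C$ with $T\colon X\to Y$ bounded and prove the embedding $L^{q,\infty}\SN\hookrightarrow Y$. Write $\widebar{Y}$ for the representing space from Definition~\ref{def:Fatouably_repre_lattice} and let $C_{\widebar{Y}}$ be a weak Fatou constant of $\widebar{Y}$. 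The whole problem reduces to showing that the profile $\eta(t)=t^{-1/q}$ (read $\eta\equiv1$ when $q=\infty$), viewed as a function on $(0,\nu(\SSS))$, belongs to $\widebar{Y}$. Indeed, every $g\in L^{q,\infty}\SN$ satisfies $g^*(t)\le\|g\|_{q,\infty}\,\eta(t)$ for $t\in(0,\nu(\SSS))$, so the lattice property of $\widebar{Y}$ forces $g^*\in\widebar{Y}$ with $\|g^*\|_{\widebar{Y}}\le\|\eta\|_{\widebar{Y}}\|g\|_{q,\infty}$, whence \eqref{E:representability} gives $\|g\|_Y=\|g^*\|_{\widebar{Y}}\le\|\eta\|_{\widebar{Y}}\|g\|_{q,\infty}$, i.e.\ $L^{q,\infty}\SN\hookrightarrow Y$.

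To establish $\eta\in\widebar{Y}$ I would test $T$ on the sets $E_j$ from the hypothesis. Since $X$ contains $\mu$-simple functions we have $\chi_{E_j}\in X$ with $\|\chi_{E_j}\|_X=\varphi_X(a_j)$, $a_j=\mu(E_j)$, so boundedness of $T\colon X\to Y$ and \eqref{E:representability} yield $\|(T\chi_{E_j})^*\|_{\widebar{Y}}=\|T\chi_{E_j}\|_Y\le\|T\|_{X\to Y}\,\varphi_X(a_j)$. A direct computation gives, for both $T_\sigma=R_\sigma$ and $T_\sigma=S^0_\sigma$,
\begin{equation*}
    T_\sigma(\chi_{(0,a_j)})(t)=\kappa\,t^{-\frac1q}\min\{t^m,a_j\}^{\frac1p},\qquad t\in(0,\infty),
\end{equation*}
with $\kappa=p$ for $R_\sigma$ and $\kappa=1$ for $S^0_\sigma$; in particular $T_\sigma(\chi_{(0,a_j)})(t)=\kappa\,a_j^{1/p}\,t^{-1/q}$ whenever $t\ge a_j^{1/m}$. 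Inserting this into \eqref{E:nonimprovability_Calderon_endpoint_R:lower_bound} gives $(T\chi_{E_j})^*(t)\ge C\kappa\,c^{-1/q}a_j^{1/p}\,\eta(t)$ for $t\in[a_j^{1/m}/c,\,\nu(\SSS))$, and the lattice property of $\widebar{Y}$ together with the previous bound yields
\begin{equation*}
    \bigl\|\eta\,\chi_{[a_j^{1/m}/c,\,\nu(\SSS))}\bigr\|_{\widebar{Y}}\le\frac{c^{1/q}\,\|T\|_{X\to Y}}{C\kappa}\cdot\frac{\varphi_X(a_j)}{a_j^{1/p}}.
\end{equation*}

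Next I would invoke the fundamental-function hypothesis \eqref{E:nonimprovability_Calderon_endpoint_R:fund}: since $a_j\to0^+$, the quantity $a_j^{-1/p}\varphi_X(a_j)$ is bounded for all large $j$, so the right-hand side above is at most some finite $K_0$ for all large $j$. Passing to a subsequence along which $(a_j)$ strictly decreases, the cut-offs $a_j^{1/m}/c$ decrease to $0$, hence $\eta\,\chi_{[a_j^{1/m}/c,\nu(\SSS))}\nearrow\eta$ pointwise on $(0,\nu(\SSS))$ through functions of uniformly bounded $\widebar{Y}$-quasinorm (by $K_0$, for all large indices). The weak Fatou property of $\widebar{Y}$ then delivers $\eta\in\widebar{Y}$ with $\|\eta\|_{\widebar{Y}}\le C_{\widebar{Y}}K_0<\infty$, and by the reduction of the first paragraph the proof is complete.

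I expect the last step to be the crux, and to be the reason the theorem is phrased for Fatou-representable targets. The lower bound \eqref{E:nonimprovability_Calderon_endpoint_R:lower_bound} only reproduces the extremal profile $t^{-1/q}$ of $L^{q,\infty}$ on the \emph{moving} tails $[a_j^{1/m}/c,\nu(\SSS))$, and never on a fixed interval of positive length near the origin, because there $T_\sigma(\chi_{(0,a_j)})(ct)$ grows only like $t^{m/p-1/q}$, which is genuinely smaller than $t^{-1/q}$. Recovering the full extremal function from these partial lower bounds is precisely the job of the weak Fatou property, so passing to representing spaces is not a convenience but the structural heart of the argument. A pleasant byproduct of the computation in the second paragraph is that the linear operator $R_\sigma$ and the sublinear operator $S^0_\sigma$ produce the same extremal profile up to the constant $\kappa$, so the two alternatives in \eqref{E:nonimprovability_Calderon_endpoint_R:lower_bound} need no separate treatment.
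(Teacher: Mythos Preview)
Your proof is correct and follows essentially the same route as the paper's: test $T$ on $\chi_{E_j}$, use \eqref{E:nonimprovability_Calderon_endpoint_R:lower_bound} together with the explicit computation of $T_\sigma(\chi_{(0,a_j)})$ to minorize $(T\chi_{E_j})^*$ by a multiple of $a_j^{1/p}t^{-1/q}$ on $[a_j^{1/m}/c,\nu(\SSS))$, combine with boundedness $T\colon X\to Y$ and \eqref{E:nonimprovability_Calderon_endpoint_R:fund} to get uniformly bounded $\widebar{Y}$-quasinorms of the truncated profiles, and finish with the weak Fatou property to put $t^{-1/q}$ into $\widebar{Y}$. Your write-up is in fact slightly more careful than the paper's in two places: you explicitly check that $L^{q,\infty}\SN\in\C$, and you pass to a strictly decreasing subsequence of $(a_j)$ so that the truncations genuinely increase to $\eta$, which the weak Fatou property requires.
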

\begin{proof}
    The multiplicative constants in this proof may depend only on the parameters $p,q$ and the constants $c, C$. Let $a=a_j$ for $j\in\N$. We have
    \begin{equation}
        R_\sigma(\chi_{(0,a)})(ct) = (ct)^{-\frac1{q}}\int_0^a s^{\frac1{p} - 1} \dd s \approx a^{\frac1{p}} t^{-\frac1{q}} \quad \text{for every $(ct)^m \geq a$}\label{E:nonimprovability_Calderon_endpoint_R:1}
    \end{equation}
    and
    \begin{equation}
         S^0_\sigma(\chi_{(0,a)})(ct) = a^{\frac1{p}} (ct)^{-\frac1{q}} \quad \text{for every $(ct)^m \geq a$}.\label{E:nonimprovability_Calderon_endpoint_R:4}
    \end{equation}
    Now, let $Y\in\C$ be such that $T\colon X \to Y$ is bounded, where $Y\in\C$. Assuming $T\colon X \to L^{q,\infty}\SN$ is bounded, we only need to show that $L^{q, \infty}\SN \hookrightarrow Y$. To this end, consider the functions
    \begin{equation*}
        f_j = a_j^{-\frac1{p}} \chi_{E_j}\in X,\ j\in\N.
    \end{equation*}
    On the one hand, using \eqref{E:nonimprovability_Calderon_endpoint_R:lower_bound}, either \eqref{E:nonimprovability_Calderon_endpoint_R:1} or \eqref{E:nonimprovability_Calderon_endpoint_R:4}, and the representability of $Y$, we have
    \begin{align}
        \|Tf_j\|_{Y} &= \|(Tf_j)_{\nu}^*\|_{\widebar{Y}} = a_j^{-\frac1{p}} \|(T\chi_{E_j})_{\nu}^*\|_{\widebar{Y}} \gtrsim a_j^{-\frac1{p}} \|T_\sigma(\chi_{(0,a_j)})(ct)\|_{\widebar{Y}} \nonumber\\
        &\gtrsim \|t^{-\frac1{q}}\chi_{(a_j^{1/m}/c,\nu(\SSS))}(t)\|_{\widebar{Y}}. \label{E:nonimprovability_Calderon_endpoint_R:2}
    \end{align}
    On the other hand, the boundedness $T\colon X \to Y$ and \eqref{E:nonimprovability_Calderon_endpoint_R:fund} imply that
    \begin{equation}\label{E:nonimprovability_Calderon_endpoint_R:3}
        \sup_{j\in\N} \|Tf_j\|_{Y} \lesssim \sup_{j\in\N} \|f_j\|_X = \sup_{j\in\N} a_j^{-\frac1{p}}\varphi_X(a_j) < \infty.
    \end{equation}
    Since $\widebar{Y}$ has the weak Fatou property, it follows from \eqref{E:nonimprovability_Calderon_endpoint_R:2} and \eqref{E:nonimprovability_Calderon_endpoint_R:3} that
    \begin{equation*}
        t^{-\frac1{q}}\chi_{(0, \nu(\SSS))}(t) \in \widebar{Y}.
    \end{equation*}
    Hence, we finally obtain
    \begin{equation*}
        \|f\|_Y = \|f^*\|_{\widebar{Y}} \leq \|t^{-\frac1{q}}\|_{\widebar{Y}} \|f\|_{L^{q, \infty}\SN}
    \end{equation*}
    for every $f\in L^{q, \infty}\SN$, that is, $L^{q, \infty}\SN \hookrightarrow Y$.
\end{proof}

\begin{remark}
    When $\nu(\SSS)<\infty$, instead of \eqref{E:nonimprovability_Calderon_endpoint_R:lower_bound}, it suffices to assume that
    \begin{equation*}
        (T\chi_{E_j})^*_\nu(t) \geq C T_\sigma(\chi_{(0,a_j)})(ct) \quad \text{for all $t\in (0, \delta)$ and $j\in\N$},
    \end{equation*}
    for some $\delta>0$ independent of $j\in\N$.
\end{remark}

\begin{example}\label{exam:nonimprovability_Calderon_endpoint_R}
    Let $\RM = \SN = \rn$ and consider the following two situations:
    \begin{enumerate}[(i)]
        \item $p=q=m=1$ and $T$ is either the Hardy--Littlewood maximal operator or Riesz/Hilbert transform. More generally, instead of the latter, $T$ may be a singular integral operator having the form
        \begin{equation*}
            Tf(x) = \lim_{\varepsilon\to 0^+} \int_{|x-y|\geq \varepsilon} \frac{\Omega(x-y)}{|x-y|^n}f(y) \dd y,\ x\in\rn,
        \end{equation*}
        where $\Omega$ is a non-identically zero odd function on $\rn\setminus\{0\}$ that is homogeneous of degree $0$ and that satisfies the following Dini-type condition:
        \begin{equation*}
            \int_0^1 \frac{\omega(\delta)}{\delta} \dd\delta < \infty,
        \end{equation*}
        where
        \begin{equation*}
            \omega(\delta) = \sup_{|x|=|y|=1, |x-y|\leq \delta} |\omega(x) - \omega(y)|,\ \delta\in(0,1).
        \end{equation*}
        Then $T$ is bounded from $L^1(\rn)$ to $L^{1,\infty}(\rn)$ and \eqref{E:nonimprovability_Calderon_endpoint_R:lower_bound} is satisfied with $T_\sigma = R_\sigma$ and $E_j = B(0,1/j)$, $j\in\N$ (see~\cite{BS,C:97,S:80}).
        \item $p=m=1$ and $q=n/(n-\alpha)$, where $\alpha\in(0,n)$. Finally, let $T$ be either the Riesz potential $I_\alpha$ or the fractional maximal operator $M_\alpha$. Then $T$ is bounded from $L^1(\rn)$ to $L^{q,\infty}(\rn)$ and \eqref{E:nonimprovability_Calderon_endpoint_R:lower_bound} is satisfied with $T_\sigma = R_\sigma$ and $E_j = B(0,1/j)$, $j\in\N$ (see~\cite{CKOP:00,S:80}).
        \end{enumerate}
        In both cases, if $X=X(\rn)$ is a rearrangement-invariant quasi-Banach lattice containing simple functions such that $X\hookrightarrow L^1(\rn)$ and such that \eqref{E:nonimprovability_Calderon_endpoint_R:fund} with $p=1$ is satisfied, then $L^{q, \infty}(\rn)$ is the optimal target for $(T, X, \C)$, where $\C$ is defined by \eqref{E:nonimprovability_Calderon_endpoint_R:class_C}.
\end{example}

\begin{definition}
    Let $\sigma=[p,q,m]\subseteq (0, \infty]\times (0, \infty] \times(0, \infty)$. We formally define the operators $H_\sigma$ and $S^\infty_\sigma$ as
    \begin{equation*}
        H_\sigma g(t) = t^{-\frac1{q}} \int_{t^m}^\infty g(s) s^{\frac1{p} - 1} \dd{s},\ g\in\M(0, \infty),\ t\in(0, \infty),
    \end{equation*}
    and
    \begin{equation*}
        S^\infty_\sigma g(t) = t^{-\frac1{q}} \sup_{t^m\leq s < \infty} g(s) s^{\frac1{p}},\ g\in\M(0, \infty),\ t\in(0, \infty).
    \end{equation*}
\end{definition}

\begin{theorem}\label{thm:nonimprovability_Calderon_endpoint_H}
    Let $\sigma=[p,q,m]\subseteq (0, \infty]\times (0, \infty] \times(0, \infty)$. Let $T_\sigma$ be either $H_\sigma$ or $S^\infty_\sigma$. Let $X=X\RM$ and $T$ be as in Theorem~\ref{thm:nonimprovability_Calderon_endpoint_R}. If $\nu(\SSS) = \infty$, assume that $\mu(\RR) = \infty$ and that there are constants $C,c>0$ and a sequence of $\mu$-measurable sets $\{E_j\}_{j =1}^\infty\subseteq\RR$ such that $\lim_{j\to \infty} \mu(E_j) = \infty$ and
    \begin{equation}\label{E:nonimprovability_Calderon_endpoint_H:lower_bound_infinite_measures}
        (T\chi_{E_j})^*_\nu(t) \geq C T_\sigma(\chi_{(0,a_j)})(ct) \quad \text{for all $t\in (0, \infty)$ and $j\in\N$},
    \end{equation}
    where $a_j = \mu(E_j)$, $j\in\N$. Otherwise, assume that there are constants $C,c>0$, a point $t_0\in(0, \nu(\SSS))$, and a $\mu$-measurable set $E\subseteq \RR$ with $0<a=\mu(E)\leq \mu(\RR)$ such that
    \begin{equation}\label{E:nonimprovability_Calderon_endpoint_H:lower_bound_finite_measure_range}
        (T\chi_E)^*_\nu(t) \geq C T_\sigma(\chi_{(0,a)})(ct) \quad \text{for every $t\in (0, t_0)$}.
    \end{equation}

    If $T\colon X \to L^{q,\infty}\SN$ is bounded and
   \begin{equation}\label{E:nonimprovability_Calderon_endpoint_H:fund_infinite_measures}
    \limsup_{t\to\infty} t^{-\frac1{p}}\varphi_X(t) < \infty \quad \text{when $\mu(\RR) = \nu(\SSS) = \infty$},
    \end{equation}
    then $L^{q, \infty}\SN$ is the optimal target for $(T, X, \C)$, where $\C$ is defined by \eqref{E:nonimprovability_Calderon_endpoint_R:class_C}.
\end{theorem}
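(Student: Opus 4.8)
The plan is to follow the proof of Theorem~\ref{thm:nonimprovability_Calderon_endpoint_R}, with the roles of the endpoints $0$ and $\infty$ interchanged throughout. Fix an arbitrary $Y=Y\SN\in\C$ for which $T\colon X\to Y$ is bounded, and let $\widebar{Y}$ be the rearrangement-invariant quasi-Banach lattice over $(0,\nu(\SSS))$ furnished by the Fatou-representability of $Y$, so that $\|g\|_Y=\|g^*\|_{\widebar{Y}}$ for every $g$ and $\widebar{Y}$ has the weak Fatou property. Since $T\colon X\to L^{q,\infty}\SN$ is assumed bounded and $L^{q,\infty}\SN$ itself belongs to $\C$ (it is a rearrangement-invariant quasi-Banach lattice, and the space $L^{q,\infty}(0,\nu(\SSS))$ that represents it has the Fatou property), the whole theorem reduces to establishing the single embedding $L^{q,\infty}\SN\hookrightarrow Y$; and, exactly as in the proof of Theorem~\ref{thm:nonimprovability_Calderon_endpoint_R}, by representability this in turn reduces to showing that $t\mapsto t^{-1/q}$ lies in $\widebar{Y}$, because then $f^*(t)\le\|f\|_{L^{q,\infty}\SN}\,t^{-1/q}$ combined with the lattice property yields $\|f\|_Y\le\|t^{-1/q}\|_{\widebar{Y}}\,\|f\|_{L^{q,\infty}\SN}$ for all $f$.

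First I would carry out the elementary computation of $H_\sigma(\chi_{(0,a)})$ and $S^\infty_\sigma(\chi_{(0,a)})$. Because these are of Copson type (acting ``near $\infty$''), the quantity $\int_{(ct)^m}^\infty\chi_{(0,a)}(s)s^{1/p-1}\dd{s}$, respectively $\sup_{s\ge(ct)^m}\chi_{(0,a)}(s)s^{1/p}$, is comparable to $a^{1/p}$ precisely when $(ct)^m$ is a fixed fraction of $a$ --- hence for $t$ in an interval of the form $(0,\kappa a^{1/m})$ with $\kappa=\kappa(c,m)>0$ --- and vanishes for larger $t$; the degenerate value $p=\infty$, where $H_\sigma$ produces an extra logarithmic factor, I would treat separately, noting that it only improves the bound. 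This gives $T_\sigma(\chi_{(0,a)})(ct)\gtrsim a^{1/p}\,t^{-1/q}\chi_{(0,\kappa a^{1/m})}(t)$ for all $t$. This is exactly the point at which the present hypotheses diverge from those of Theorem~\ref{thm:nonimprovability_Calderon_endpoint_R}: there the analogous interval was $(a^{1/m}/c,\nu(\SSS))$, a neighbourhood of $\infty$, which forced $\mu(E_j)\to0$; here it is a neighbourhood of $0$, so the sets $E_j$ must instead grow, and a single set becomes sufficient once $\nu(\SSS)<\infty$.

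When $\nu(\SSS)=\infty$ I would proceed verbatim as for Theorem~\ref{thm:nonimprovability_Calderon_endpoint_R}: put $f_j=a_j^{-1/p}\chi_{E_j}\in X$ with $a_j=\mu(E_j)\to\infty$; from \eqref{E:nonimprovability_Calderon_endpoint_H:lower_bound_infinite_measures}, the computation above, positive homogeneity of $T$, the lattice property and representability of $Y$ one obtains $\|Tf_j\|_Y\gtrsim\|t^{-1/q}\chi_{(0,\kappa a_j^{1/m})}\|_{\widebar{Y}}$, while the boundedness of $T\colon X\to Y$ together with the fundamental-function bound \eqref{E:nonimprovability_Calderon_endpoint_H:fund_infinite_measures} gives $\sup_{j\in\N}\|Tf_j\|_Y\lesssim\sup_{j\in\N}a_j^{-1/p}\varphi_X(a_j)<\infty$; since $\kappa a_j^{1/m}\to\infty$, the functions $t^{-1/q}\chi_{(0,\kappa a_j^{1/m})}$ increase pointwise to $t^{-1/q}$ on $(0,\infty)$, and the weak Fatou property of $\widebar{Y}$ delivers $t^{-1/q}\in\widebar{Y}$. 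When $\nu(\SSS)<\infty$ only one test function is at my disposal and weak Fatou cannot be applied directly: I would test on $f=a^{-1/p}\chi_E\in X$ (here $\chi_E\in X$, so that the fundamental-function bound is not needed in this branch), and \eqref{E:nonimprovability_Calderon_endpoint_H:lower_bound_finite_measure_range} --- after shrinking $t_0$ to $\min\{t_0,\kappa a^{1/m}\}$ --- together with the same chain of inequalities yields only $t^{-1/q}\chi_{(0,t_0)}\in\widebar{Y}$, and hence $\chi_{(0,t_0)}\in\widebar{Y}$ because $t^{-1/q}\ge t_0^{-1/q}$ on $(0,t_0)$.

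The routine parts are the computation in the second paragraph and the bookkeeping of multiplicative constants. The one genuinely delicate step is the final one in the case $\nu(\SSS)<\infty$: passing from $t^{-1/q}\chi_{(0,t_0)}\in\widebar{Y}$ to $t^{-1/q}\in\widebar{Y}$ on all of $(0,\nu(\SSS))$ cannot use weak Fatou, so I would instead exploit that $\nu(\SSS)<\infty$ by covering $(t_0,\nu(\SSS))$ with $\lceil(\nu(\SSS)-t_0)/t_0\rceil$ intervals of length at most $t_0$ --- a \emph{finite} number --- and applying rearrangement invariance and the quasi-triangle inequality that many times to get $\chi_{(t_0,\nu(\SSS))}\in\widebar{Y}$, at the cost of a multiplicative constant depending only on the quasi-triangle constant of $\widebar{Y}$ and on $\nu(\SSS)/t_0$; then $t^{-1/q}\chi_{(t_0,\nu(\SSS))}\le t_0^{-1/q}\chi_{(t_0,\nu(\SSS))}$ and one more use of the lattice property close the argument. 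In both branches $t^{-1/q}\in\widebar{Y}$, and the reduction of the first paragraph finishes the proof.
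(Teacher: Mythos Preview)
Your proof is correct and follows essentially the same approach as the paper's own proof: the same elementary computation of $H_\sigma(\chi_{(0,a)})$ and $S^\infty_\sigma(\chi_{(0,a)})$, the same test functions $f_j=a_j^{-1/p}\chi_{E_j}$ (respectively $f=a^{-1/p}\chi_E$), and the same use of weak Fatou in the infinite-measure case. In the finite-measure case you are actually more explicit than the paper, which simply writes $\|\chi_\SSS\|_{\widebar Y}<\infty$ without justification; your covering of $(t_0,\nu(\SSS))$ by finitely many intervals of length $t_0$, combined with rearrangement invariance of $\widebar Y$ and the quasi-triangle inequality, supplies precisely the missing step.
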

\begin{proof}
    The multiplicative constants in this proof may depend only on the parameters $p,q$ and the constants $c, C$. Let $b>0$. When $p<\infty$, we have
    \begin{equation}\label{E:nonimprovability_Calderon_endpoint_H:1}
        H_\sigma(\chi_{(0,b)})(ct) = (ct)^{-\frac1{q}}\int_{(ct)^m}^b s^{\frac1{p} - 1} \dd s \approx b^{\frac1{p}} t^{-\frac1{q}} \quad \text{for every $(ct)^m \leq b/2$}. 
    \end{equation}
    When $p = \infty$, we have
    \begin{align}
        H_\sigma(\chi_{(0,b)})(ct) &= (ct)^{-\frac1{q}}\int_{(ct)^m}^b s^{-1} \dd s \nonumber\\
        &\gtrsim t^{-\frac1{q}}\int_{b/2}^b s^{-1} \dd s \approx t^{-\frac1{q}} \quad \text{for every $(ct)^m \leq b/2$}.
    \end{align}
    Furthermore, in either case, we have
    \begin{equation}\label{E:nonimprovability_Calderon_endpoint_H:9}
        S^\infty_\sigma(\chi_{(0,b)})(ct) \approx t^{-\frac1{q}} b^{\frac1{p}} \quad \text{for every $(ct)^m \leq b/2$}.
    \end{equation}
    Now, let $Y\in\C$ be such that $T\colon X \to Y$ is bounded. Assume that $T\colon X \to L^{q,\infty}\SN$ is bounded. First, assume that $\mu(\RR) = \nu(\SSS) = \infty$, and set
    \begin{equation*}
        f_j = a_j^{-\frac1{p}} \chi_{E_j}\in X,\ j\in\N.
    \end{equation*}
    On the one hand, using \eqref{E:nonimprovability_Calderon_endpoint_H:lower_bound_infinite_measures}, one of \eqref{E:nonimprovability_Calderon_endpoint_H:1}--\eqref{E:nonimprovability_Calderon_endpoint_H:9}, and the representability of $Y$, we have
    \begin{align}
        \|Tf_j\|_{Y} &= \|(Tf_j)^*\|_{\widebar{Y}} = a_j^{-\frac1{p}} \|(T\chi_{E_j})^*\|_{\widebar{Y}} \gtrsim a_j^{-\frac1{p}} \|T_\sigma(\chi_{(0,a_j)})(ct)\|_{\widebar{Y}} \nonumber\\
        &\gtrsim \|t^{-\frac1{q}}\chi_{(0,(a_j/2)^{1/m}/c)}(t)\|_{\widebar{Y}}\label{E:nonimprovability_Calderon_endpoint_H:3}
    \end{align}
    for every $j\in\N$.  On the other hand, the boundedness $T\colon X \to Y$ and \eqref{E:nonimprovability_Calderon_endpoint_H:fund_infinite_measures} imply that
    \begin{equation}\label{E:nonimprovability_Calderon_endpoint_H:4}
        \sup_{j\in\N} \|Tf_j\|_{Y} \lesssim \sup_{j\in\N} \|f_j\|_X = \sup_{j\in\N} a_j^{-\frac1{p}}\varphi_X(a_j) < \infty.
    \end{equation}
    Since $\widebar{Y}$ has the weak Fatou property, it follows from \eqref{E:nonimprovability_Calderon_endpoint_H:3} and \eqref{E:nonimprovability_Calderon_endpoint_H:4} that
    \begin{equation}\label{E:nonimprovability_Calderon_endpoint_H:8}
        t^{-\frac1{q}} \in \widebar{Y}.
    \end{equation}
    Hence, we finally obtain
    \begin{equation*}
        \|f\|_Y = \|f^*\|_{\widebar{Y}} \leq \|t^{-\frac1{q}}\|_{\widebar{Y}} \|f\|_{L^{q, \infty}\SN}
    \end{equation*}
    for every $f\in L^{q, \infty}\SN$, that is, $L^{q, \infty}\SN \hookrightarrow Y$. When $\nu(\SSS)<
    \infty$, we set
    \begin{equation*}
        f = a^{-\frac1{p}}\chi_{E}\in X
    \end{equation*}
    and use \eqref{E:nonimprovability_Calderon_endpoint_H:lower_bound_finite_measure_range} instead of \eqref{E:nonimprovability_Calderon_endpoint_H:lower_bound_infinite_measures}. Similarly to \eqref{E:nonimprovability_Calderon_endpoint_H:3}, it follows that
    \begin{equation*}
        \|Tf\|_{Y} \gtrsim \|t^{-\frac1{q}}\chi_{(0,a_0)}(t)\|_{\widebar{Y}},
    \end{equation*}
    where $a_0 = \min\{t_0, (a/2)^{1/m}/c\}$. Hence
    \begin{equation}\label{E:nonimprovability_Calderon_endpoint_H:6}
        \|t^{-\frac1{q}}\chi_{(0,a_0)}(t)\|_{\widebar{Y}} \lesssim \|Tf\|_{Y} \lesssim \|f\|_X = a^{-\frac1{p}} \varphi_X(a) < \infty.
    \end{equation}
    Furthermore, using the fact that $\nu(\SSS) < \infty$, we also have
    \begin{equation}\label{E:nonimprovability_Calderon_endpoint_H:7}
        \|t^{-\frac1{q}}\chi_{(a_0,\nu(\SSS))}(t)\|_{\widebar{Y}} \leq (a_0)^{-\frac1{q}} \|\chi_{\SSS}\|_{\widebar{Y}} < \infty.
    \end{equation}
    Combining \eqref{E:nonimprovability_Calderon_endpoint_H:6} and \eqref{E:nonimprovability_Calderon_endpoint_H:7}, we obtain \eqref{E:nonimprovability_Calderon_endpoint_H:8}, whence the desired embedding $L^{q, \infty}\SN \hookrightarrow Y$ follows as before.
\end{proof}

\begin{example}
     Let $\RM = \SN = \rn$, $\alpha\in(0, n)$, $p = n/\alpha$, $q = \infty$, and $m = 1$. For $T = I_\alpha$ or $T = M_\alpha$, \eqref{E:nonimprovability_Calderon_endpoint_H:lower_bound_infinite_measures} is satisfied with $T_\sigma = H_\sigma$ or $T_\sigma = S_\sigma^\infty$, respectively, and $E_j = B(0,j)$, $j\in\N$ (see~\cite{CKOP:00,S:80}). Assume that $X=X(\rn)$ is a rearrangement-invariant quasi-Banach lattice containing simple functions such that \eqref{E:nonimprovability_Calderon_endpoint_H:fund_infinite_measures} is satisfied and $X\hookrightarrow L^{n/\alpha,1}(\rn)$ when $T = I_\alpha$; $X\hookrightarrow L^{n/\alpha,\infty}(\rn)$ when $T = M_\alpha$. Then $L^\infty(\rn)$ is the optimal target for $(T, X, \C)$, where $\C$ is defined by \eqref{E:nonimprovability_Calderon_endpoint_R:class_C}.
\end{example}

\section*{Acknowledgment}
The authors would like to thank the referee for reading the paper and their valuable comments.


\end{document}